\newtheorem{thm}{Theorem}
\newtheorem{lem}[thm]{Lemma}
\theoremstyle{definition}
\newtheorem{defn}[thm]{Definition}
\theoremstyle{remark}
\newtheorem{ex}[thm]{Example}
\newcommand{\R}{\mathbb{R}}
\title{An Optimal Constraint for QUBO Models}
\author{Clark Alexander\footnote{Chicago Quantum} \\ email: \href{mailto:mailto:research@quantum-usaci.com}{the author}}
\begin{document}
	
	\maketitle
	
	\begin{abstract}
		A quadratic binary unconstrained optimization model, hereafter QUBO, by definition is unconstrained.  This, however, is not ideal if one needs to select a model containing only a fixed size binary vector.  In this work we show how to add a constraint to a QUBO to force a particular size solution.
	\end{abstract}

    \section{Introduction}
    
    As the industry of quantum computing has been making inroads in applications, the first set of problems which are in-line for fast, efficient ``solutions" are combinatorial optimization problems \cite{CA, DW1, STB}
    As of this writing, the technique which appears to be furthest along is quantum annealing.  This is an appealing technique as it allows one with a reasonably powerful laptop and a high level language compiler to benchmark the quantum annealing process.  Generally speaking, the most-straightforward way to apply quantum annealing is to pose one's problem in terms of a quadratic form.  In particular, if one can pose a problem in terms of a solution which marks a set of elements as ``in" or ``out" then quantum annealing works well.  Additionally, there are many classical, heuristic, and probabilistic techniques which one can use to benchmark a quantum annealer; for example, simulated annealing, genetic algorithms, simulated bifurcation machines, branch-and-bound, etc. \cite{DigiAnn, GKD, GTD, EKU}.  
    
    However, quadratic forms with ``in"/``out" solutions require binary vectors and depending on the particular type of quadratic form given, one can expect a very specific set of solutions to show up.  For example, if one provides a positive definite matrix as a quadratic form then the known minimum is the zero vector.  There is no need to apply probabilistic techniques, all non zero vectors will give positive solutions and in searching for a minimum, we cannot go lower than zero. Additionally if the quadratic form is a well known random matrix, a Gaussian ensemble, for instance, then the best solution will contain roughly half ``in" and half ``out."  These solutions are expected, but maybe not completely useful.  
    
    Recently, there has been a push for using quadratic forms in financial optimization \cite{CKA40, CKA60, MLO}  One common strategy is to maximize expected return while minimizing volatility.  Volatility is, however, covariance, which is positive semi-definite.  In reality covariance is positive definite with real data.  Thus trying to minimize a positive definite matrix reveals the ``mathematically correct"  solution of not investing as that reduces volatility to zero.  However, for an investor, this is not a useful solution. Perhaps an investor wants to invest in an ``optimal" set of 15 assets.  An unconstrained problem will never produce such an answer, thus one wishes to add a constraint in a somewhat natural way so that a set of 15 assets becomes visible.

    \section{Elementary Properties of a Random QUBO}
    
    A quadratic unconstrained binary optimization (hereafter QUBO) can be thought of as a square matrix.  While this is a slight abuse of notation, there should not be too much confusion as the actual optimization involves finding a vector for the particular matrix.  So where it is clear we will refer to the matrix and the optimization problem both as QUBO.  
    \begin{defn}
    	Given a real symmetric matrix $A\in \R^{N\times N}$ and a real vector $B\in \R^N$, a quadratic unconstrained binary optimization is a problem in which one seeks the vector $x \in  \{0,1\}^N$ so that 
    	\begin{equation}
    	Q(A,B) = x^t A x + B\cdot x
    	\end{equation}
    	achieves a minimum value.
    	That is
    	\begin{equation}
    	\min_x x^t A x + B\cdot x
    	\end{equation}
    \end{defn}
    
    It is also common to expect $A$ to be upper or lower triangular and traceless.  However by considering $(A+A^t)/2$ one can trivially move back and forth between triangular and symmetric matrices.  We prefer the symmetric matrix as it guarantees real eigenvalues and is more in-line with the principles of quantum mechanics and therefore allows one to pose a matrix $A$ (or a small variant thereof) as a Hamiltonian in an Ising model which is the current technology used in quantum annealing and simulated bifurcation machines.  This also allows one the ability to easily translate between quantum, digital, and simulated annealing for bench marking purposes.

    \begin{lem}
    	While considering a binary vector $x\in\{0,1\}^N$ one can reduce a QUBO into a single matrix (which need not be traceless).  That is one can reduce
    	\[
    	x^t A x + B\cdot x \rightarrow x^t \tilde{A} x
    	\]
    	
    	where 
    	\begin{equation}
    	\tilde{A} = A + \text{diag}(B)
    	\end{equation}
    	with diag$(B)$ being a diagonal matrix with 
    	\[ 
    	diag(B)_{ij} = B_i \delta_{ij} 
    	\]
    \end{lem}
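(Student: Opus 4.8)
The plan is to exploit the single algebraic fact that distinguishes a binary vector from an arbitrary real one: idempotency of the coordinates, i.e. $x_i^2 = x_i$ for every $x_i \in \{0,1\}$. This identity is what lets a linear form be absorbed into a quadratic form.

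First I would expand the candidate quadratic form coordinate-wise:
\[
x^t \tilde{A} x = x^t (A + \text{diag}(B)) x = x^t A x + x^t \,\text{diag}(B)\, x,
\]
using bilinearity of the map $(u,v) \mapsto u^t M v$ in the matrix argument $M$. Next I would evaluate the second summand directly from the definition $\text{diag}(B)_{ij} = B_i \delta_{ij}$, obtaining
\[
x^t \,\text{diag}(B)\, x = \sum_{i,j} x_i\, B_i \delta_{ij}\, x_j = \sum_i B_i x_i^2 .
\]
Then I would apply $x_i^2 = x_i$, valid precisely because $x \in \{0,1\}^N$, to rewrite $\sum_i B_i x_i^2 = \sum_i B_i x_i = B \cdot x$. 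Combining the two displays yields $x^t \tilde{A} x = x^t A x + B \cdot x$ for all $x \in \{0,1\}^N$, which is the claimed reduction; in particular the two optimization problems have identical objective values on the feasible set, hence identical minimizers.

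There is essentially no obstacle here: the only subtlety worth flagging is that the identity is an equality of functions on $\{0,1\}^N$, not on $\R^N$, so $\tilde{A}$ genuinely carries a diagonal (trace) that the original $A$ need not have — which is exactly the parenthetical caveat ``need not be traceless'' in the statement. I would close by remarking that this is why the passage to $\tilde{A}$ is harmless for QUBO purposes even though it would change the quadratic form on all of $\R^N$.
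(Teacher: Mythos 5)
Your proof is correct and uses exactly the same key idea as the paper: the idempotency $x_i^2 = x_i$ for binary coordinates, which turns $x^t\,\text{diag}(B)\,x$ into $B\cdot x$. You merely run the computation in the opposite direction (expanding $x^t\tilde{A}x$ rather than absorbing $B\cdot x$), which is an equivalent presentation of the same argument.
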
	
    
    \begin{proof}
    	Since each $x_i$ is 0 or 1 we trivially have $x_i = x_i^2$.
    	This reveals
    	\[
    	B\cdot x = x^t \text{diag}(B) x
    	\]
    	
    	Which allows us to factor $x^t$ on the left and $x$ on the right
    	\begin{eqnarray}
    	x^tAx + B\cdot x & = & x^t A x + x^t \text{diag}(B) x \nonumber\\
    	                 & = & x^t (A + \text{diag}(B)) x \nonumber\\
    	                 & = & x^t \tilde{A} x
    	\end{eqnarray}
    	
    \end{proof}

    From here forward we shall simply refer to $\tilde{A}$ as $A$.  
    
    In quantum annealing, the vector $x_i \in \{0,1\}$ is exchanged for a vector $z_i \in \{-1,1\}$ with the simple transformation
    \[
    z_i = 2x_i - 1
    \]
    This affects our matrix $A$ and also gives one an offset vector where
    \begin{eqnarray}
    x^t A x = z^t J z + C\cdot z + \text{const}
    \end{eqnarray}
    
    The matrix $J = A/4$ and offset vector $C$ are calculated by a simple change of variables.  For the purposes of this article we will remain in the space $x_i \in \{0,1\}$ and only mention that can transform when necessary.

    \section{The Constraint Matrix}
    
    Given our model
    \[
    \min_x x^t A x
    \]
    with $A$ a real symmetric matrix of size $N\times N$ we wish to add a constraint matrix $C$ to $A$ so that our vector $x$ has norm $\|x\|^2 = m$; or more simply the $L^1$ norm is $M$, $\|x\|_1 = M$ for some $1 \le M \le N$
    
    Thus our new model becomes
    \[
    \min_x x^t (A+C) x
    \] 
   
   If we pass this quadratic form to a solver (whether a simulated annealer or quantum annealing computer or a simulated bifurcation machine) we can expect that a ``good" solver will produce a vector $x$ of the required size.
   
   \begin{thm}
   	Let $A$ be a real symmetric matrix of size $N\times N$.  Then the addition of matrix
   	\begin{equation}
   	C = \alpha J_N + \beta I_N
   	\end{equation}
   	to $A$ will produce a QUBO with optimal vector $x$ of size $\|x\|_1 = M$
   	when 
   	\begin{equation}
   	M = \frac{-\beta}{2\alpha} \label{optimal parameters}
   	\end{equation}
   	In particular there is a line of solutions.  The larger $\alpha$ the stronger the pull toward $\|x\|_1 = M$. 
   	
   \end{thm}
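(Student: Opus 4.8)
The plan is to exploit the fact that the constraint matrix $C = \alpha J_N + \beta I_N$ (with $J_N$ the all-ones matrix, $\alpha>0$) sees a binary vector $x$ only through its weight $k=\|x\|_1$. First I would record, for any $x\in\{0,1\}^N$,
\[
x^t J_N x = \Big(\sum_{i=1}^N x_i\Big)^2 = \|x\|_1^2,\qquad x^t I_N x = \sum_{i=1}^N x_i^2 = \sum_{i=1}^N x_i = \|x\|_1,
\]
the last identity again using $x_i=x_i^2$. Hence $x^t C x = \alpha\|x\|_1^2 + \beta\|x\|_1$, and completing the square,
\[
x^t C x = \alpha\Big(\|x\|_1 + \tfrac{\beta}{2\alpha}\Big)^2 - \tfrac{\beta^2}{4\alpha} = \alpha\big(\|x\|_1 - M\big)^2 - \alpha M^2,
\]
with $M=-\beta/(2\alpha)$ exactly as in \eqref{optimal parameters}. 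So, up to an additive constant, the constraint penalizes a candidate by $\alpha$ times the squared deviation of its weight from the target $M$.

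Next I would stratify the Boolean cube into its weight shells $S_k=\{x\in\{0,1\}^N:\|x\|_1=k\}$ for $k=0,\dots,N$, and set $a_k=\min_{x\in S_k}x^tAx$. Since $x^tCx$ is constant on each shell, the global optimum of the modified QUBO is
\[
\min_x x^t(A+C)x = \min_{0\le k\le N}\big(a_k + \alpha(k-M)^2\big) - \alpha M^2 .
\]
For the minimizing $k$ to be the shell nearest $M$ — in particular $k=M$ when $M$ is an integer — it suffices that $a_k + \alpha(k-M)^2 \ge a_{k^\ast}$ for every other shell $k$, where $k^\ast$ is the target shell; because $|k-k^\ast|\ge 1$ for every other integer $k$, this holds once $\alpha \ge a_{k^\ast}-a_k$, and a fortiori once $\alpha$ exceeds the full range $\max_x x^tAx - \min_x x^tAx$ of the original objective, which is finite (crudely at most $2\sum_{i,j}|A_{ij}|$ on binary vectors). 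This is precisely the sense in which a larger $\alpha$ strengthens the pull toward $\|x\|_1=M$, and the pairs $(\alpha,\beta)$ with $\beta=-2\alpha M$ visibly form a line of admissible constraints.

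The main subtlety — and the one genuine gap between the clean statement and what is literally true — is that $M=-\beta/(2\alpha)$ need not be an integer, in which case no vector has weight exactly $M$ and the honest conclusion is that the optimizer has weight equal to the nearest integer $\lfloor M\rceil$. I would therefore either impose the standing hypothesis $M\in\{1,\dots,N\}$ or phrase the conclusion via rounding. A secondary caveat: the bound ``$\alpha$ larger than the range of $x^tAx$'' is sufficient but far from sharp — if the shell minima $a_k$ already grow away from $M$, a much smaller $\alpha$ suffices — which is exactly why the theorem speaks of a ``pull'' rather than a hard guarantee for every $\alpha>0$. I would add one remark to that effect rather than chase the optimal threshold.
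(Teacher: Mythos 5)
Your core computation is the same as the paper's: both arguments observe that on binary vectors $x^t J_N x = \|x\|_1^2$ and $x^t I_N x = \|x\|_1$, so that $x^t C x = \alpha\|x\|_1^2 + \beta\|x\|_1$ depends on $x$ only through its weight and, for $\alpha>0$, is minimized at weight $-\beta/(2\alpha)$. The paper essentially stops there: it never analyzes the combined objective $x^t(A+C)x$, so strictly speaking it only shows that the penalty term favors weight $M$, not that the optimizer of $A+C$ actually has weight $M$. Your second half closes exactly that gap: stratifying the cube into weight shells, writing the global optimum as $\min_k\bigl(a_k+\alpha(k-M)^2\bigr)-\alpha M^2$ with $a_k$ the shell minima of $x^tAx$, and exhibiting the explicit sufficient threshold $\alpha\ge \max_x x^tAx-\min_x x^tAx$ turns the paper's informal ``the larger $\alpha$ the stronger the pull'' into a provable statement. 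You are also right to flag that $M$ must be an integer in $\{1,\dots,N\}$ for the conclusion to be literally attainable; the paper tacitly assumes this. Two small refinements: your threshold condition should be a strict inequality (at equality the target shell can tie with another), and it is worth stating that for $\alpha$ below the threshold the optimizer's weight can genuinely differ from $M$ --- the paper's own histograms at $\alpha=0.1$ and $\alpha=0.2$ illustrate precisely this. Overall your proof is correct and strictly more complete than the one given in the paper.
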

   
   \begin{proof}
   	 Since we wish to minimize $x^t A x$ where $|x|_1 = M$ we consider the addition of the matrix $C = \alpha J_N + \beta I_N$ where $J_N$ is the $N\times N$ matrix of all ones and $I_N$ is the $N \times N$ identity matrix.
   	 
   	 This reduces our calculation to 
   	 \begin{equation}
   	 \min_x x^t(A + C)x \implies \min_{|x|_1 = M} x^t C x 
   	 \end{equation}
   	 
   	 Since $J_N$ and $I_N$ yield straightforward multiplications we have
   	 \begin{equation}
   	 x^t(\alpha J_N + \beta I_N) x = \alpha M^2 + \beta M
   	 \end{equation}
   	 
   	 Now we see the results directly.  We choose $\alpha$ and $\beta$ to minimize $x^tCx$ by
   	 \begin{equation}
   	 M = \frac{-\beta}{2\alpha}
   	 \end{equation}
   	 
   	 Noting that $\alpha > 0$ gives a minimum and $\alpha < 0$ gives a maximum.
   	 
   	 Further more we can reparameterize this to be a single parameter constraint as
   	 \begin{equation}
   	 C(\alpha) = \alpha(J_N - 2MI_N)
   	 \end{equation}
   	 We can also see this as
   	 \[
   	 C(\alpha) = \alpha\begin{bmatrix} 
   	 1-2M & 1 & 1 & \dots & 1\\
      1 & 1-2M & 1 & \dots & 1 \\
      1 & 1 & 1-2M & \dots & 1\\
       \vdots & \vdots & \vdots & \ddots & \vdots\\
      1 & 1 & 1 & \dots & 1-2M
    \end{bmatrix}   	 \]
   	 
   \end{proof}

\section{Two Examples}
\begin{ex}
	
Let's take a quick look at how to simulate this in a modern computing language.  We'll produce a random symmetric matrix of size $N\times N$ and require a solution of size $M$ where $M$ is significantly different from $N/2$.

In the code on the author's local computer, we will run a simulated annealer which produces two outputs, (a) the cost $x^tAx$, (b) the solution vector.

\begin{algorithm}
	\caption{Picking M asets; Julia/Octave style}
	\label{Adjacency to rotation}
	\begin{algorithmic}
		\State $A = randn(N,N)$
		\State $A = (A+ A^t)/2$
		\State $C(\alpha) = \alpha*(\text{ones}(N,N) - 2M* \text{eye}(N))$
		\State cost, solution = anneal($A+C(0.5)$)
		\State sum(solution) 
	\end{algorithmic}	
\end{algorithm}
    
Using five hundred runs with parameters $N=30, M=8$ we have a histograms 
for $\alpha \in  \{0, 0.1, 0.2, 0.5, 1, 2, 10\}$. When $\alpha = 0$ we have the unconstrained model, where ones expects a solution of size $15 \pm \sqrt{30}$ so we expect between 10 and 20 as the global best solution.  We are running our simulated annealer cooling very quickly and only a few trials per degree so as to show the efficacy of the constraint.  We can see the pull toward 8 assets as $\alpha$ increases.

\begin{figure}[!ht]
	\centering
	\subfloat[Unconstrained]{{\includegraphics[width=5.5cm]{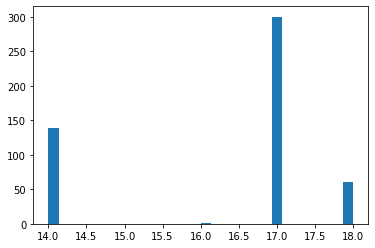} }}%
	\qquad
	\subfloat[$\alpha = .1$]{{\includegraphics[width=5.5cm]{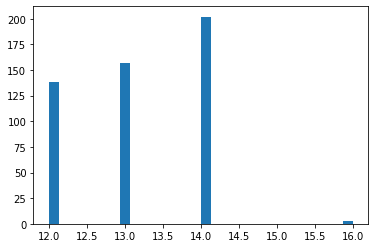} }}%
	\newline
	\subfloat[$\alpha = 0.2$]{{\includegraphics[width=5.5cm]{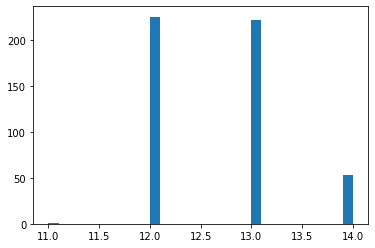} }}%
	\qquad
	\subfloat[$\alpha = 0.5$]{{\includegraphics[width=5.5cm]{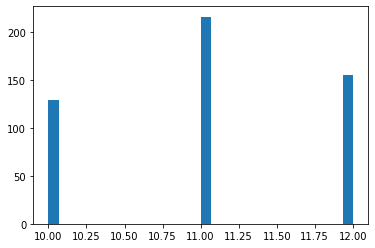} }}%
	\caption{Histograms with increasing constraints}
	\label{fig: Gaussian charts 1}
\end{figure}
	
\begin{figure}[!ht]	
	\qquad
	\subfloat[$\alpha = 1$]{{\includegraphics[width=5.5cm]{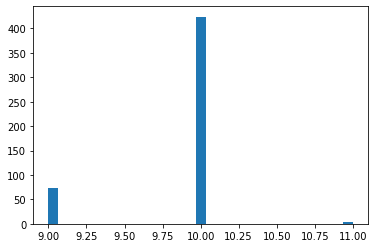} }}%
	\qquad
	\subfloat[$\alpha = 2$]{{\includegraphics[width=5.5cm]{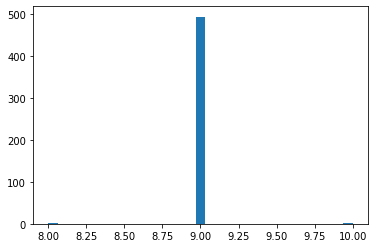} }}%
	\qquad
	\subfloat[$\alpha = 10$]{{\includegraphics[width=5.5cm]{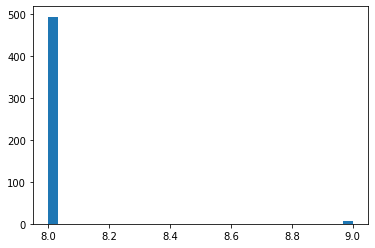} }}%
	\qquad
	\subfloat[$\alpha = 0.5$]{{\includegraphics[width=5.5cm]{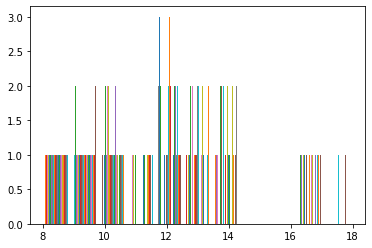} }}%
	\caption{Histograms with increasing constraints
	}
	\label{fig:Gaussian charts 2}
\end{figure}

The final frame in \ref{fig:Gaussian charts 2} is a stacked histogram showing all 7 histograms together.  We see the density of selections increasing as we move toward 8.  Looking all the way back to the first figure in \ref{fig: Gaussian charts 1} we see that the global minimum is likely larger than 15, most likely at 17.  If we were to have a larger Gaussian ensemble or one in which the global minimum is at or below $N/2$ then the shift toward a smaller number of selected assets is even clearer.  In this case, if we had selected the number of assets to be 20, we would see the rightward shift in the histograms more strongly.

\end{ex}

\begin{ex}

For our second example, we'll use a positive semi-definite matrix of size $30\times 30$ and again apply constraints to select 8 assets.

It is important to note that the simulated annealer on the author's computer tends to avoid picking exactly zero assets.  Thus in our first figure in \ref{fig: Positive Definite 1} we see a split between zero and one, even though we know the mathematically sound answer is exactly zero.

\begin{figure}[!ht]
	\centering
	\subfloat[Unconstrained]{{\includegraphics[width=5.5cm]{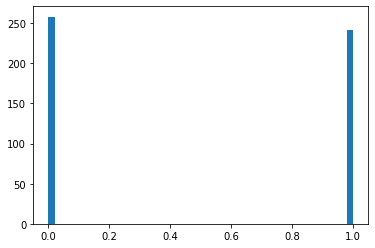} }}%
	\qquad
	\subfloat[$\alpha = .1$]{{\includegraphics[width=5.5cm]{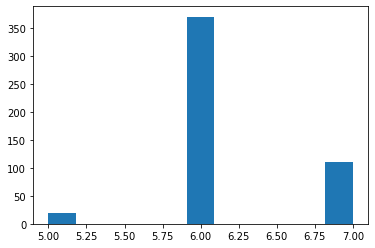} }}%
	\newline
	\subfloat[$\alpha = 0.2$]{{\includegraphics[width=5.5cm]{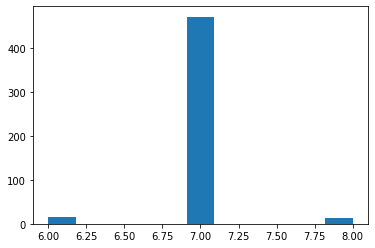} }}%
	\qquad
	\subfloat[$\alpha = 0.5$]{{\includegraphics[width=5.5cm]{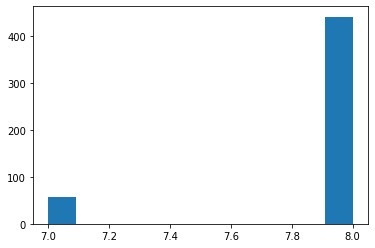} }}%
	\caption{Histograms with increasing constraints}
	\label{fig: Positive Definite 1}
\end{figure}

\begin{figure}[!ht]	
	\qquad
	\subfloat[$\alpha = 1$]{{\includegraphics[width=5.5cm]{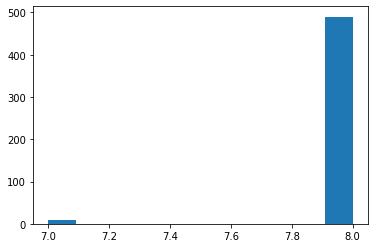} }}%
	\qquad
	\subfloat[$\alpha = 2$]{{\includegraphics[width=5.5cm]{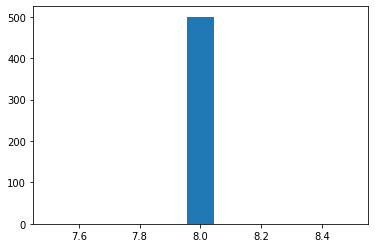} }}%
	\qquad
	\subfloat[$\alpha = 10$]{{\includegraphics[width=5.5cm]{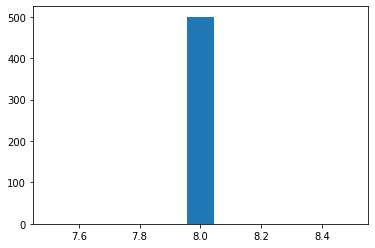} }}%
	\qquad
	\subfloat[$\alpha = 0.5$]{{\includegraphics[width=5.5cm]{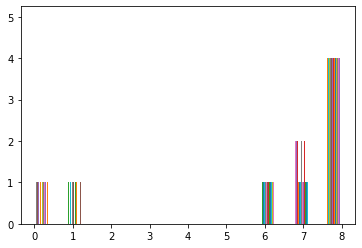} }}%
	\caption{Histograms with increasing constraints
	}
	\label{fig:Positive Definite 2}
\end{figure}

We see the efficacy of the constraint here much more prominently.  Simulated annealers are not necessary with such small scale QUBO as $30\times 30$ as a brute force solution can be obtained in a matter of minutes.  Additionally, small shifts in small QUBO models don't have such a large effect.  If one were to repeat these examples with $N = 5000$ and $M = 50$ the effects of $\alpha = 0.1$ would appear much more clearly.  Nonetheless even with $\alpha$ as small as $0.2$ we begin seeing solutions with 8 assets.  At $\alpha= 0.5$ the lion's share are solutions with 8 assets (442 out of 500 in this particular numerical experiment).  And at $\alpha = 2$ and above we select 8 assets without fail.  Thus is the last figure in \ref{fig:Positive Definite 2} we see an intense density at 8 assets.  With all other assets smaller.  This is consistent with the first example in which 8 assets becomes the minimum number of assets chosen as the unconstrained solution has greater than 8 assets, in this case the unconstrained solution has fewer.

\end{ex}

\end{document}